\documentclass[11pt]{amsart}
\usepackage{palatino}
\usepackage{amsmath, amsthm}
\usepackage{amssymb, graphicx, epsfig}

\usepackage[margin=1.5cm,
  vmargin={0pt,1cm},
  nohead]{geometry}
\setlength{\voffset}{-1in} \setlength{\marginparsep}{0in}
\setlength{\marginparwidth}{0in} \setlength{\marginparpush}{0in}
\setlength{\hoffset}{-1in} \setlength{\oddsidemargin}{3.6cm}
\setlength{\evensidemargin}{3.6cm} \setlength{\textwidth}{14.4cm}
\setlength{\topmargin}{2cm} \setlength{\headheight}{15pt}
\setlength{\headsep}{16pt} \setlength{\textheight}{22cm}
\setlength{\footskip}{18pt}

\newtheorem{theorem}{$\quad$Theorem}[section]

\newtheorem{corollary}[theorem]{$\quad$Corollary}

\newtheorem{lemma}[theorem]{$\quad$Lemma}

\newtheorem{definition}[theorem]{$\quad$Definition}

\newcommand{\R}{{\mathbb R}}
\newcommand{\C}{{\mathbb C}}
\newcommand{\Z}{{\mathbb Z}}

\theoremstyle{definition}
\newtheorem{remark}[theorem]{$\quad$Remark}

\newcounter{bibno}

\title{Flexible Isotopy Classification of Flexible Links}
\author{Johan Bj\"{o}rklund}

\begin{document}

\begin{abstract}
In this paper we define and study flexible links and flexible isotopy in $\R P^3\subset\C P^3$. Flexible links are meant to capture the topological properties of real algebraic links. We classify all flexible links up to flexible isotopy using Ekholm's interpretation of Viro's encomplexed writhe.
\end{abstract}

\maketitle

\section{Introduction}
Classical knot theory is the study of embeddings of $S^1$ into $\R^3$ or $S^3$ considered up to smooth isotopy. Real algebraic knot theory studies real algebraic curves in $\R P^3\subset\C P^3$ considered up to rigid isotopy, see for instance \cite{bjorklund}. In this paper we study smooth embeddings of surfaces into $\C P^3$ such that they have most of the topological properties of real algebraic curves.
Flexible curves in the plane were defined by Viro \cite{virocurves} following the observation that many of the properties of real algebraic curves of low degree in the projective plane only depended on certain topological properties.
The flexible curves were meant to caption these topological properties. It is natural to extend this definition to curves in projective 3-space to obtain flexible links.

\begin{definition}
An embedding $f:S_g\rightarrow \C P^3$, where $S_g$ is a closed connected orientable surface of genus $g$, is called a \emph{flexible link} of degree $d$ and genus $g$ if it fulfills the following three conditions:
\begin{itemize}
 \item $f(S_g)$ realizes $d[\C P^1]\in H_2(\C P^3)$.
 \item $f(S_g)$ is invariant under conjugation.
 \item $T_x(f(S_g))=\C T_x(\R f(S_g))$ for $x\in \R f(S_g)$, where $\R f(S_g)$ is the real part of $f(S_g)$ and $\C L$ denotes the complexification of the tangentline $L$, furthermore, we require the orientation induced by the complex structure on $\C P^3$ to agree with the orientation of $S_g$ along $\R f(S_g)$.
\end{itemize}
\end{definition}
We consider flexible links of genus $g$ up to automorphisms of $S_g$ respecting the real part. To simplify the notation we let $K=f(S_g)$ and we let $\R K:=K\cap \R P^3$.

\begin{remark}
The third condition differs slightly from Viro's definition for flexible curves in the plane. This is to simplify the definition of the encomplexed writhe (see Remark \ref{simplewrithe}).
\end{remark}

Viro's original definition also proscribed that the genus $g$ should satisfy $g=\frac{(d-1)(d-2)}{2}$. We omit this condition entirely since the genus of algebraic curves in space of a given degree can attain genus lower than this number without self-intersections.
Two flexible knots/links $K$ and $K'$ are said to be \emph{flexibly isotopic} if there exists a path in the moduli space of flexible knots/links connecting them, that is, if there exists a smooth isotopy taking $K$ to $K'$ such that it is at all times a flexible link. In the real algebraic world there is a similar notion of rigid isotopy. It is easy to see that a real algebraic knot/link is a flexible knot/link and that a rigid isotopy implies a flexible isotopy. To distinguish real algebraic knots up to rigid isotopy the main invariant is Viro's encomplexed writhe originally defined in \cite{virowrithe}. The encomplexed writhe is an invariant up to flexible isotopy using Ekholm's \cite{ekholmwrithe} definition of the encomplexed writhe in terms of shade numbers as seen in Section \ref{shade}.
Together with some other basic topological invariants and the smooth isotopy class of its real part the encomplexed writhe uniquely determines the flexible isotopy class of a flexible link as seen by the following theorem.
\begin{theorem}\label{main1}
Two flexible links $K$ and $K'$ are flexibly isotopic if and only if their degree, genus, Type and encomplexed writhe coincides and $\R K$ is smoothly isotopic to $\R K'$. 
\end{theorem}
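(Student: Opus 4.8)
The plan is to prove the two implications separately, with the forward (necessity) direction being routine and the converse (sufficiency) carrying all the difficulty. For necessity I would check that each listed quantity is preserved under a flexible isotopy. The degree is the homology class $d[\C P^1]$, which is locally constant along any path of embeddings and hence invariant; the genus is a diffeomorphism invariant of $S_g$. Since a flexible isotopy is conjugation-equivariant at all times, it carries $\R K$ to $\R K'$ through embeddings of links in $\R P^3$, so these are smoothly isotopic, and the same equivariance shows the Type (whether $\R K$ divides $K$) is preserved. Finally, the invariance of the encomplexed writhe under flexible isotopy is exactly the content recalled in Section \ref{shade} via Ekholm's shade-number formulation, so no new argument is needed.

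For sufficiency I would exploit the fact that a flexible link splits into a rigid part and a flexible part: the real part $\R K$ is a $1$-manifold in the $3$-manifold $\R P^3$ (codimension $2$, genuinely knotted), while the complex part $K\setminus\R K$ is a surface in the $6$-manifold $\C P^3$ (codimension $4$, where general position applies and homotopic embeddings are isotopic, making the complex part flexible). First, using the given smooth isotopy between $\R K$ and $\R K'$, I would invoke the isotopy extension theorem to obtain an ambient isotopy of $\R P^3$, and then, by symmetrizing the generating vector field in a tubular neighborhood, promote it to a conjugation-equivariant ambient isotopy of $\C P^3$, arranging that $\R K=\R K'$ as subsets of $\R P^3$. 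The complexified-tangency condition then forces $K$ and $K'$ to agree to first order along $\R K$, so after a further small equivariant isotopy I may assume they coincide on a tubular neighborhood of the real locus.

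It then remains to compare the two complex parts in the complement of this neighborhood. These form (in the dividing case a conjugate pair of, in the non-dividing case a single) surface-with-boundary embedded in $\C P^3\setminus\R P^3$, on which conjugation acts freely; hence it suffices to construct the matching isotopy on a fundamental domain and extend it equivariantly. Because the ambient codimension is $4$, the isotopy class of such a sheet rel boundary is governed only by discrete data: the degree and genus, together with the Type, which dictates the combinatorial way the sheet attaches to $\R K$ and whether the two conjugate sheets are joined. The residual freedom after fixing these is a twisting/framing parameter along $\R K$, and this is precisely what the encomplexed writhe records through the shade numbers. Since all of these data agree for $K$ and $K'$, the discrepancy can in principle be cancelled by a sequence of equivariant moves.

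The main obstacle is exactly this last completeness step: showing that equality of the encomplexed writhe is \emph{sufficient}, not merely necessary, to match the complex parts. Concretely, one must produce explicit local modifications (finger moves and Whitney-type disk cancellations, performed equivariantly on conjugate pairs) that preserve all three defining conditions of a flexible link and whose effect on the configuration is measured exactly by the change in shade numbers. The delicate points are maintaining conjugation-invariance and the complexified-tangency condition along the way, and verifying that the writhe is additive over these moves, so that the global identity $\writhe(K)-\writhe(K')=0$ can be distributed into locally cancelling pairs. Once such a move calculus is established, matching writhe lets one reduce the difference to the trivial configuration, completing the flexible isotopy.
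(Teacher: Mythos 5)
Your necessity argument and your opening reduction (use the isotopy of real parts plus the complexified-tangency condition to arrange $\R K=\R K'$ with $K$ and $K'$ agreeing on a neighbourhood of the real locus) match the paper, which accomplishes this via Lemma \ref{isotopyextension}. But after that point your proposal has a genuine gap, and it sits exactly where you say the difficulty is: you defer the completeness step --- that equal writhe (together with degree, genus, Type) suffices to match the complex parts --- to a ``move calculus'' of equivariant finger moves and Whitney-disk cancellations whose existence, writhe-additivity, and completeness are never established. That calculus is the theorem; asserting that ``once such a move calculus is established'' the proof finishes is circular as a proof outline. Moreover, your description of the residual freedom as a ``twisting/framing parameter along $\R K$'' is not the right picture: after the real parts and a suitable spine are matched, the remaining ambiguity is not a framing along the real curve but the rel-boundary homotopy class of a conjugate pair of embedded disks in $\C P^3\setminus\R P^3$, which is a priori an infinite discrete invariant, and one must identify it with the data in the theorem.

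The paper closes this gap with an algebraic-topology computation rather than a move calculus. It enlarges the coincidence set in stages: first it matches the dividing curves coming from crosscaps of the quotient $K/conj$ (this is where Type and genus enter, since they determine the number of such curves), then it matches arcs generating $H_1(K,Q)$, using in both steps that $\C P^3\setminus\R P^3$ is simply connected and that a generic conjugation-equivariant homotopy of a $2$-dimensional surface in the $6$-dimensional complement has no self-intersections, hence is an isotopy (Lemma \ref{homoiso} --- this is the precise justification for your ``codimension $4$'' heuristic). After these steps $K$ and $K'$ differ only in a conjugate pair of disks with common boundary. Contracting that boundary, the discrepancy lives in $\pi_2(\C P^3\setminus\R P^3)\simeq H_2(\C P^3\setminus\R P^3)\simeq\Z^2$ by Hurewicz, and the two coordinates of this $\Z^2$ are read off exactly by intersection number with the hyperplane at infinity and with the shade $\Gamma_x$ of Section \ref{shade}. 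Equality of degrees fixes the first coordinate and equality of encomplexed writhes fixes the second, so the disks are homotopic rel boundary, hence (again by Lemma \ref{homoiso}) flexibly isotopic. This identification of the residual obstruction group with $\Z^2$, detected precisely by degree and writhe, is the key idea your proposal is missing; without it, or a genuinely constructed substitute, the argument does not go through.
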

Theorem \ref{main1} is proved in Section \ref{proofsection}. We will show how to construct flexible links in Section \ref{construct} with given properties subject to some basic restrictions given in Section \ref{basic}. Any flexible link must fulfill these restrictions, and given parameters

\section{Basic properties of flexible links}\label{basic}

In this section we define some basic properties of flexible links and show that some properties of real algebraic links survive to the flexible world. The proofs can usually be lifted directly from the corresponding proofs for real algebraic links.

\begin{definition}
We say that a flexible link $K$ is of \emph{Type II} if $K\setminus \R K$ is connected and of \emph{Type I} if $K\setminus \R K$ is not connected.
\end{definition}
\begin{remark}
It is easy to see that $K\setminus \R K$ can contain at most two parts by noting that for any connected component $A\subset K\setminus \R K$ we have that $conj(A)$ is also a component. When we travel along $K$ we change component when passing through $\R K$. This can only change our component to its conjugate, thus there can be no more than two components at most since $S_g$ is connected. 
\end{remark}
 
\begin{lemma}
The real part $\R K$ of a flexible link $K$ of genus $g$ has at most $g+1$ components, and if it has exactly $g+1$ components it must be of Type I.
\end{lemma}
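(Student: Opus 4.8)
The plan is to realise $\R K$ as the fixed-point locus of an orientation-reversing involution on $S_g$ and then run the classical Harnack-type counting argument via Euler characteristics. First I would restrict complex conjugation on $\C P^3$ to $K$; since $K$ is conjugation-invariant and $f$ is an embedding, this defines a smooth involution $\sigma := f^{-1}\circ conj\circ f$ on $S_g$, whose fixed-point set $f^{-1}(\R K)$ is in bijection, component by component, with $\R K$. The key local input is the third defining condition: at a real point $x$ the tangent plane $T_x K$ equals $\C T_x(\R K)$, a complex line on which conjugation acts as $z\mapsto\bar z$, that is, orientation-reversingly. Since $S_g$ is connected, this forces $\sigma$ to be globally orientation-reversing, and then at each fixed point $d\sigma$ has eigenvalues $+1,-1$, so $\text{Fix}(\sigma)$ is a closed $1$-submanifold, i.e. a disjoint union of $k$ circles.

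Next I would split according to the Type of $K$, that is, according to whether $S_g\setminus\text{Fix}(\sigma)$ is connected, and compute the Euler characteristic of the quotient $Q:=S_g/\sigma$. Because the quotient map is two-to-one off the $k$ fixed circles and injective on them, and circles have vanishing Euler characteristic, one gets $\chi(Q)=\tfrac12\chi(S_g)=1-g$ in all cases. In the Type I case $S_g\setminus\text{Fix}(\sigma)$ has two components interchanged by $\sigma$, so $Q$ is (the closure of) one of them: an orientable surface of some genus $h\ge 0$ with $k$ boundary circles. Hence $2-2h-k=1-g$, giving $k=g+1-2h\le g+1$, with equality exactly when $h=0$.

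Finally, in the Type II case $S_g\setminus\text{Fix}(\sigma)$ is connected and $\sigma$ acts there freely and orientation-reversingly; the quotient of a connected orientable manifold by such an involution is non-orientable, so $Q$ is a non-orientable surface with $k$ boundary circles, i.e. a connected sum of $m\ge 1$ projective planes with $k$ disks removed. Thus $2-m-k=1-g$, giving $k=g+1-m\le g$. Combining the two cases yields $k\le g+1$ in every case, while $k=g+1$ is impossible in Type II, so it can occur only in Type I. The main obstacle I anticipate is not the bookkeeping but justifying the two structural facts cleanly: that the tangency condition makes $\sigma$ orientation-reversing, so that $\R K$ consists of circles at all, and that the non-dividing case forces $Q$ to be non-orientable; once these are in hand the Euler characteristic computation is routine.
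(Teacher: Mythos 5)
Your proof is correct, but it takes a genuinely different route from the paper's. The paper disposes of the lemma in one line: by the remark immediately preceding it, $K\setminus\R K$ has at most two connected components (every component is paired with its conjugate, and crossing $\R K$ can only move you to the conjugate component), while removing more than $g+1$ disjoint circles from a closed genus-$g$ surface leaves more than two pieces; removing exactly $g+1$ still forces a disconnection, hence Type I. You instead quotient by the conjugation involution $\sigma$ and run the classical Harnack-style Euler characteristic count on $Q=S_g/\sigma$, splitting according to orientability of $Q$. Your route is longer and rests on the two structural facts you yourself flag: that $\sigma$ is orientation-reversing (your tangent-plane argument via the third condition is the right one, though it needs $\R K\neq\emptyset$; the empty case is trivial), and that a free orientation-reversing involution on a connected orientable surface has non-orientable quotient (correct: an orientation on the quotient would pull back to one preserved by the deck transformation). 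In exchange it proves strictly more than the paper does here: in Type II you get the sharper bound $k\le g$, and in Type I the exact relation $k=g+1-2h$, which already contains the parity constraint $g\equiv k-1 \pmod 2$ that the paper establishes separately in a later lemma; moreover the quotient surface $K/conj$ you introduce is precisely the object $K_C$ that the paper uses later in the proof of the main theorem. One small gap to note: in your Type I case you assert that $S_g\setminus\mathrm{Fix}(\sigma)$ has \emph{exactly} two components, interchanged by $\sigma$; disconnectedness alone gives only at least two, and components preserved by $\sigma$ must be ruled out. This follows from transitivity of the $\sigma$-action on complementary components (equivalently, from connectedness of $Q$ minus its boundary, or from the paper's remark), so the argument stands once that observation is added.
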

\begin{proof}
This directly follows from the previous remark since removing more than $g+1$ circles from $K$ would result in more then two components.
\end{proof}

\begin{remark}
A flexible link without real components must be of Type II since $K\setminus \R K=K$.
\end{remark}
\begin{remark}
A flexible link $K$ is of type $I$ if and only if the quotient space $K/conj$ is orientable, since conjugation reverses orientation. A flexible link $K$ of type $I$ and genus $g$ where $\R K$ har $n$ components satisfies $g\equiv n+1$ counted modulo $2$ due to the symmetry properties of the conjugation. 
\end{remark}
\begin{lemma}
If $K$ is a flexible link of genus $g$ and type $I$ with $n$ real components, then $g\equiv n-1 \mod 2$.
\end{lemma}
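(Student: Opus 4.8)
The plan is to analyze conjugation as an orientation-reversing involution on the abstract surface $S_g$ and to realize $K$ as the double of one of the two halves of its complement. First I would note that, since $K$ is invariant under conjugation and conjugation reverses the orientation of $\C P^3$ while fixing $\R P^3$ pointwise, the conjugation restricts via $f$ to a smooth orientation-reversing involution $\tau$ on $S_g$ whose fixed-point set is exactly $\R K$, a disjoint union of $n$ embedded circles. Observe also that $n\ge 1$ here: if $n=0$ then $K\setminus\R K=K$ is connected, which forces Type II, contradicting the hypothesis.

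Next I would examine $\tau$ near a fixed circle. Because $\tau$ fixes each such circle pointwise it acts as the identity on the tangent direction along the circle; being orientation-reversing on the ambient surface, it must then act as $-1$ on the normal direction. Hence near each fixed circle $\tau$ is modeled on the reflection $(\theta,t)\mapsto(\theta,-t)$ of an annular neighborhood $S^1\times(-1,1)$, so it interchanges the two local sides of every fixed circle.

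By the Type I hypothesis $K\setminus\R K$ has exactly two components $A$ and $\tau(A)$, and the previous step shows that each of the $n$ circles of $\R K$ is adjacent to both of them. Therefore the closure $\bar A$ is a connected compact orientable surface (an open subsurface of the orientable $S_g$) whose boundary is all of $\R K$; that is, $\bar A$ has some genus $h\ge 0$ and exactly $n$ boundary circles, and $K$ is the double of $\bar A$ glued along $\R K$. This is precisely the preceding remark's identification of $K/conj\cong\bar A$ as an orientable surface.

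Finally I would run an Euler-characteristic count. Gluing two copies of $\bar A$ along their $n$ boundary circles, each of Euler characteristic $0$, gives $\chi(K)=2\chi(\bar A)$. Substituting $\chi(\bar A)=2-2h-n$ and $\chi(K)=2-2g$ yields $2-2g=2(2-2h-n)$, hence $g=2h+n-1$, and reducing modulo $2$ gives $g\equiv n-1 \pmod 2$, as claimed. I expect the main (though standard) obstacle to be the local analysis in the second step, namely establishing that every fixed circle genuinely separates $A$ from $\tau(A)$ so that $\bar A$ carries all $n$ circles as boundary; once this is in place the remaining Euler-characteristic computation is routine.
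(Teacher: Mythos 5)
Your overall route coincides with the paper's: split $K$ along $\R K$ into two conjugate halves, observe that they are homeomorphic, and count. The paper does the count by reattaching the real circles one at a time, arriving at $g=2g'+n-1$; your Euler-characteristic computation for the double gives the identical formula $g=2h+n-1$, so that part is just different bookkeeping. Your explicit verification that every circle of $\R K$ is adjacent to \emph{both} halves (so that $\bar A$ has all $n$ circles as boundary) is a point the paper passes over silently, and it is good that you flagged it as the step needing care.

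However, the justification you give for that step does not hold up as stated. You deduce that $\tau$ is orientation-reversing on $S_g$ from the fact that conjugation reverses the orientation of $\C P^3$ and fixes $\R P^3$ pointwise; that inference is false for general conjugation-invariant surfaces. For example, an embedded sphere or torus lying entirely inside $\R P^3\subset\C P^3$ is invariant under conjugation, which restricts to it as the identity map --- orientation-preserving, with two-dimensional fixed set. What makes your claim true for flexible links is precisely the third condition in the definition, which such a surface violates: at a real point $x$ one has $T_xK=\C T_x(\R K)$, and the differential of conjugation acts on this complex line as complex conjugation, fixing the tangent direction along $\R K$ and negating the normal direction $iT_x(\R K)$. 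This gives you directly both the local reflection model (hence the side-swapping you need) and, since $S_g$ is connected and $n\geq 1$, the global orientation reversal; alternatively, the side-swapping follows from the fact that the fixed set of $\tau$ is exactly the one-dimensional set $\R K$ together with linearization of smooth involutions, since a normal eigenvalue $+1$ would force $\tau$ to be the identity near the circle. With that repair your argument is complete and agrees with the paper's.
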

\begin{proof}
Consider $K$ as a manifold in itself. After removing the $n$ real components $K\setminus \R K$ will consist of two connected components with $n$ removed discs. By the symmetry of the conjugation they both have the same genus $g´$. We reattach one of the real components to obtain a connected open submanifold. This results in a connected genus $2g'$ manifold with $2n-2$ discs removed. Each real component we reattach will increase the genus by one and remove $2$ ``holes''. Thus we have $2g'+n-1=g$ after all the real components have been reattached. Modulo $2$ this gives the relation stated in the lemma.  
\end{proof}
\begin{lemma}
If $K$ is a flexible link of degree $d$ then $\R K$ realizes $d\in \Z_2\simeq H_1(\R P^3)$.
\end{lemma}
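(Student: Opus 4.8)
The plan is to reduce the statement to a mod-$2$ intersection count against a conjugation-invariant hyperplane. Recall that $H_1(\R P^3)\cong H_1(\R P^3;\Z_2)\cong\Z_2$ is generated by the class of a projective line $\R P^1$, that $H_2(\R P^3;\Z_2)\cong\Z_2$ is generated by the class of a plane $\R P^2$, and that the $\Z_2$-intersection pairing between them is perfect with $[\R P^1]\cdot[\R P^2]=1$. Hence the class of $\R K$ in $\Z_2$ is completely determined by its mod-$2$ intersection number with a generic real plane, and it suffices to show that this number equals $d\bmod 2$.

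Next I would pick a real (that is, conjugation-invariant) hyperplane $H\subset\C P^3$; then $H\cong\C P^2$, $conj(H)=H$, and $L:=H\cap\R P^3\cong\R P^2$. The key preliminary step is transversality: I claim that for generic real $H$ the surface $K$ meets $H$ transversally and $\R K$ meets $L$ transversally. For a point $[z]\in\C P^3$ one has $z\wedge\bar z=-2i\,\mathrm{Re}(z)\wedge\mathrm{Im}(z)$, so $[z]$ is non-real precisely when $\mathrm{Re}(z)$ and $\mathrm{Im}(z)$ are $\R$-linearly independent; at such a point the real linear forms $z\mapsto\sum_i a_i z_i$, $a\in\R^4$, already surject onto $\C$, so the evaluation map defining the hyperplane sections is submersive along $K\setminus\R K$ and parametric transversality yields a generic $H$ transverse to $K$ at every non-real point. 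At a real point $x\in\R K$ the tangent condition gives $T_x K=\C\,T_x(\R K)$ and $T_x H=\C\,T_x L$, so $K$ and $H$ are transverse at $x$ if and only if $\R K$ and $L$ are transverse at $x$ inside $\R P^3$; the latter holds for generic $H$ by the usual transversality of hyperplane sections in $\R P^3$.

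With such an $H$ fixed, $K\cap H$ is a finite set. Homologically $[K]\cdot[H]=d\bigl([\C P^1]\cdot[\C P^2]\bigr)=d$, and since the intersection is transverse the geometric cardinality satisfies $\#(K\cap H)\equiv d\pmod 2$. Because both $K$ and $H$ are conjugation-invariant, conjugation acts on $K\cap H$; its fixed points are exactly the real intersection points $\R K\cap L$, while the remaining points occur in conjugate pairs. Thus $\#(K\cap H)\equiv\#(\R K\cap L)\pmod 2$, so the mod-$2$ intersection number of $\R K$ with $L$ equals $d\bmod 2$, and by the first paragraph $[\R K]=d\in\Z_2$.

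The routine intersection-theoretic bookkeeping is harmless; I expect the only genuine obstacle to be the transversality claim at the non-real points, which is why I single out the $\R$-linear independence of $\mathrm{Re}(z)$ and $\mathrm{Im}(z)$ as the mechanism that makes the three-dimensional family of real hyperplanes rich enough for parametric transversality there.
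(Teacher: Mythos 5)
Your proposal is correct and follows essentially the same route as the paper: intersect $K$ with a generic real hyperplane, note that $[K]\cdot[H]=d$, observe that non-real intersection points cancel in conjugate pairs modulo $2$, and identify the real intersection points with the mod-$2$ pairing that determines $[\R K]\in H_1(\R P^3;\Z_2)$. The only difference is presentational: the paper keeps $H$ as the hyperplane at infinity and genericity is achieved by a real linear transformation of $\C P^3$, whereas you fix $K$ and vary $H$, supplying the parametric transversality argument (including the use of the flexibility condition $T_xK=\C T_x(\R K)$ at real points) that the paper leaves implicit.
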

\begin{proof}
We can calculate $d$ by counting the number of points (with signs) in the intersection of $K$ with the hyperplane at infinity (we can assume that it is generic after a real linear transformation of $\C P^3$). Non-real points come in complex conjugate pairs and so do not contribute to $d$ when calculated modulo $2$. The number of real points of at infinity $K$ is the same as the number of points in $\R K$ situated at infinity which determines its homology.  
\end{proof}

\begin{lemma}\label{isotopyextension}
Any smooth isotopy of the real part $\R K$ of a flexible link $K$ can be extended to a flexible isotopy such that its restriction to the real part is the smooth isotopy, and such that outside of a $\epsilon$ neighbourhood of $\R P^3$ it is constant.
\end{lemma}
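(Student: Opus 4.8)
The plan is to reduce the statement to the isotopy extension theorem together with a $conj$-equivariant complexification of the resulting ambient isotopy in a tubular neighbourhood of $\R P^3$, followed by a localising cutoff. First I would promote the given smooth isotopy of $\R K$ to an ambient isotopy of $\R P^3$: since $\R P^3$ is compact, the isotopy extension theorem applied to the embedded circles $\R K\subset\R P^3$ produces a smooth ambient isotopy $\phi_t\colon\R P^3\to\R P^3$ with $\phi_0=\mathrm{id}$ realising the prescribed isotopy on $\R K$. Let $X_t$ denote its generating time-dependent vector field.

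Next I would complexify $\phi_t$ near the real locus. As the fixed-point set of the antiholomorphic involution $conj$, the submanifold $\R P^3$ is totally real of maximal dimension, so $T_x\C P^3=T_x\R P^3\oplus J\,T_x\R P^3$ and the complex structure $J$ gives a bundle isomorphism $T\R P^3\cong N$ onto the normal bundle $N=J\,T\R P^3$. I would pick a $conj$-invariant tubular neighbourhood $\pi\colon U\to\R P^3$ (averaging any tubular neighbourhood over the $\Z_2$-action) and identify $U$ with a disc bundle in $N$ via the exponential map of a $conj$-invariant metric. On $U$ define $\hat\Phi_t(x,v)=(\phi_t(x),(d\phi_t)_x v)$ using $N\cong T\R P^3$; after shrinking $U$ this is a diffeomorphism onto its image for each $t$. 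Since $conj$ acts by $(x,v)\mapsto(x,-v)$ and $d\phi_t$ is fibrewise linear, $\hat\Phi_t$ commutes with $conj$. Crucially, along the zero section the differential $d\hat\Phi_t$ restricts to $d\phi_t$ on $T\R P^3$ and to $J\,d\phi_t\,J^{-1}$ on $N$, hence commutes with $J$, i.e. is complex-linear there.

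To localise, I would pass to the generating time-dependent vector field $\hat X_t$ of $\hat\Phi_t$ on $U$, which is $conj$-invariant and equals $X_t$ on the zero section. Multiplying by a $conj$-invariant bump function $\rho$ (a function of $|v|$ that is $1$ for $|v|\le\delta$ and $0$ for $|v|\ge\epsilon$) yields $\rho\hat X_t$, whose flow $\Phi_t$, extended by the identity over the rest of $\C P^3$, is the desired ambient isotopy: it is the identity outside the $\epsilon$-neighbourhood where $\rho\equiv 0$, it agrees with $\hat\Phi_t$ where $\rho\equiv 1$, it preserves $\R P^3$ (being tangent to the zero section there), and it commutes with $conj$ since $\rho\hat X_t$ is $conj$-invariant. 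One then checks that $K_t:=\Phi_t(K)$ is flexible for all $t$: the class $d[\C P^1]$ is preserved because $\Phi_t$ is an ambient isotopy, and $conj$-invariance of $K_t$ follows from $conj\circ\Phi_t=\Phi_t\circ conj$ and $conj(K)=K$. Near $\R P^3$ the isotopy equals $\hat\Phi_t$, so $\R K_t=\Phi_t(\R K)=\phi_t(\R K)$ is the prescribed isotopy, and at points of $\R K_t$ the differential $d\Phi_t=d\hat\Phi_t$ is $J$-linear, whence $T_x K_t=d\Phi_t(\C\,T\R K)=\C\,d\phi_t(T\R K)=\C T_x(\R K_t)$.

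The main obstacle is precisely this last tangent condition: a generic $conj$-equivariant extension of $\phi_t$ need not have $J$-linear differential along the real part and would then fail the third flexible-link condition. The device that overcomes it is building the fibrewise extension through the identification $N\cong T\R P^3$ furnished by $J$, which forces $d\Phi_t$ to be complex-linear exactly on the zero section (where $\R K_t$ lives), while the cutoff is confined to the region off $\R P^3$ where no tangent condition must be verified. The remaining ingredients—isotopy extension, existence of a $conj$-invariant tubular neighbourhood, and the bump-function localisation—are routine.
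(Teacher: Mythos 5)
Your proof is correct, but it takes a genuinely different route from the paper's. The paper works intrinsically on the surface $K$: inside a small tubular neighbourhood of $\R P^3$ the link is a collar $\R K\times[-1,1]$, and the given isotopy of the core $\R K$ is extended by hand over this collar --- respecting the tangency condition on an inner subcylinder, freely (but conjugation-equivariantly and avoiding $\R P^3$) on the outer parts, constantly at the collar boundary --- after which embeddedness of the moving surface is rescued by general position: self-intersections of a generic conjugation-equivariant homotopy of a $2$-dimensional object in the $6$-dimensional $\C P^3$ can be avoided, since they could only come from conjugate pairs colliding. You instead work ambiently: isotopy extension in $\R P^3$, then the fibrewise-linear complexification $\hat\Phi_t(x,v)=(\phi_t(x),(d\phi_t)_x v)$ through the identification $N\cong T\R P^3$ furnished by $J$, then a conjugation-invariant cutoff, and finally $K_t:=\Phi_t(K)$. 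This buys two things the paper leaves informal: embeddedness of $K_t$ is automatic (ambient diffeomorphisms preserve embeddings, so no genericity argument is needed at all), and preservation of the third flexibility condition becomes an explicit computation ($d\Phi_t$ commutes with $J$ along the zero section), whereas the paper only asserts that the extension over the inner subcylinder can be chosen to respect that condition. Conversely, the paper's argument is lighter on machinery (no isotopy extension theorem, no tubular-neighbourhood bookkeeping), and its genericity technique is reused later in Lemma \ref{homoiso}. One small point to tighten in your writeup: the flow of the truncated field $\rho\hat X_t$ agrees with $\hat\Phi_t$ not on all of the region $\{\rho\equiv1\}$ but only on the open set of points whose $\hat X_t$-trajectories remain in that region; since this set contains a neighbourhood of the compact invariant zero section, the properties you actually use --- that $\Phi_t$ restricts to $\phi_t$ on $\R P^3$ and that $d\Phi_t$ is $J$-linear at real points --- still hold, so the argument goes through.
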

\begin{proof}
In a small enough tubular neighbourhood of $\R P^3$, considered as a subset of $\C P^3$, the knot $K$ will be diffeomorphic to $\R K\times [-1,1]$ where the zerosection corresponds to the real part. On the interior subcylinder $\R K\times [-0.5,0.5]$ we extend the isotopy such that it respects the third property of flexible knots. On the remaining part, we are free to extend it however we wish as long as we avoid $\R P^3$ and respect the invariance under conjugation. We extend the isotopy along $\R K\times [0.5,1]$ in $\C P^3\setminus\R P^3$ such that it is constant along $\R K\times \{1\}$. The extension along the opposite end of the cylinder follows by conjugation. We can assume that there will be no selfintersections along the way since any such selfintersections would be from two complex conjugate points. But since we have an isotopy of a $2-$dimensional object in $6-$dimensional space we can assume that there are no such ``collisions'' along a generic isotopy. We now have a flexible isotopy of the small tubular neighourhood of $\R K$ in $K$ which is constant on the boundary and so trivially extended to a flexible isotopy of $K$ which is constant in the complement of this tubular neighbourhood.  
\end{proof}

\section{The encomplexed writhe}\label{shade}
In this section we recall the definition of Viro's encomplexed writhe, originally introduced in \cite{virowrithe}, using Ekholm's definition in terms of shade number from \cite{ekholmwrithe}. 
While Ekholm's definition holds true for more general settings (higher dimensions etc), we will restrict ourselves to flexible links in $\C P^3$. While the writhe was originally defined for real algebraic knots where it was considered as a rigid isotopy invariant, it is easy to see that it is well defined for flexible knots and also flexible isotopy invariant.
From an intuitive point of view there are 4 objects which we are interested in comparing, namely the flexible link $K$, its real parts $\R K$, the complex projective space $\C P^3$ and its real part $\R P^3$. The relationship between $K$ and $\C P^3$ is encoded in the degree $d$ of $K$. The relationship between $\R K$ and $\R P^3$ is determined by the smooth isotopy class of $\R K$. 
The writhe measures the relationship between $\R P^3$ and $K$.

\subsection{Defining the encomplexed writhe using shade classes}
Given a flexible link $K$ its encomplexed writhe $w(K)$ is calculated as follows.
Choose a nonzero section $N$ in the real normal bundle of $\R K$. Let $\R\hat K$ be $\R K$ pushed off along $\epsilon N$ for some small $\epsilon$. Let $\R w_N(K)=lk(\R K,\R \hat{K})$. Since $H_1(\R P^3)=\Z_2$ this is well defined (but dependent on the choice of $N$).
Now we take the normal section $\epsilon iN$ along $\R K$ when seen as a subset of $K$ and extend it to the entire $K$, such that it is zero in the complement of a small neighbourhood of $\R K$. Let $K'$ be $K$ pushed off along the resulting normal vectorfield. 
Due to the third condition of flexible knots, $K'$ will have no real points. Given a real point $x$ in $\R P^3$ we define its shade $\Gamma_x$ as the union of all the complexifications of real lines through $x$. The manifold $\Gamma_x$ is then a $4-$manifold. It is not orientable, however we can consider it to have twice $\R P^3$ as boundary, making it an orientable manifold.
We let $\C wr_N(K)=\frac{1}{2}K'\cdot \Gamma_x$. While $\Gamma_x$ depends on $x$ the intersection number does not.
The encomplexed writhe $w(K)$ is then defined as $w(K):=\C wr_N(K)+\R w_N(K)$.
\begin{remark}\label{simplewrithe}
The reason for the slight change in the definition of flexible link from Viro's planar definition is due to the pushoff along $iN$ defined above. By making sure that the complexification of the real tangent lines are the tangent planes to the entire flexible knot we ensure that $iN$ is still normal to $K$. 
\end{remark}

\section{Classifying flexible knots}\label{proofsection}
In this section we prove the main theorem of the paper, namely that writhe, degree, Type, genus and smooth topology of the real part is enough to determine the flexible isotopy class of a flexible link. Furthermore, we show that for any combination of these invariants (subject to some simple and necessary conditions) there exists a flexible link realizing them.
Before we prove the theorem, we show the following lemma:

\begin{lemma}\label{homoiso}
If $K$ is a flexible link and we have a homotopy $h$ respecting complex conjugation taking $K$ to a flexible link $K'$ such that $h$ is constant on a small neighbourhood $B$ of $\R P^3\subset\C P^3$ and such that the restriction of the homotopy to $K\setminus B$ is inside $\C P^3\setminus B$ then there exists a flexible isotopy taking $K$ to $K'$.
\end{lemma}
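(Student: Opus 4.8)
The plan is to observe that all defining conditions of a flexible link except embeddedness are preserved automatically along $h$, so that the entire content of the lemma is a general position statement: after a small perturbation each time-slice of $h$ can be made an embedding.

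First I would use that $h$ is constant on $B\supset\R P^3$. Writing $h_t\colon S_g\to\C P^3$ for the time-$t$ map and $K_t=h_t(S_g)$, the part of the surface lying over $B$ never moves, while the rest moves inside $\C P^3\setminus B\subset\C P^3\setminus\R P^3$. Hence every $K_t$ meets $\R P^3$ in exactly $\R K$, with the same tangent planes as $K$, so the third (tangent) condition, which constrains only the real points, holds for all $t$ for free; the homology class $d[\C P^1]$ is a homotopy invariant, and $h$ respects conjugation by hypothesis, so the first two conditions persist as well. Therefore $K_t$ is a flexible link the moment $h_t$ is an embedding, and it suffices to perturb $h$ — keeping it constant on $B$, fixing $h_0$ and $h_1$, keeping it conjugation-equivariant, and keeping the moving part in $\C P^3\setminus B$ — so that every $h_t$ is injective and immersive; since $S_g$ is compact this makes each $h_t$ an embedding and $h$ a flexible isotopy.

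The perturbation is pure general position, and here the numerics are favourable. Consider the double-point locus $D=\{(x,y,t): x\neq y,\ h_t(x)=h_t(y)\}$ in $(S_g\times S_g\setminus\Delta)\times[0,1]$, whose source has dimension $2+2+1=5$, while the condition $h_t(x)=h_t(y)$ is the preimage of the diagonal and has codimension $\dim_{\R}\C P^3=6$; the expected dimension of $D$ is $5-6=-1$. By the parametric (multijet) transversality theorem, a generic small perturbation of $h$, taken rel $S_g\times\{0,1\}$ where $D$ is already empty, makes $D$ empty, i.e. makes each $h_t$ injective. Likewise the corank-one locus of a map $S_g\to\C P^3$ has codimension $6-2+1=5>3=\dim(S_g\times[0,1])$, so generically no $h_t$ fails to be an immersion. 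The equality $6=2\cdot 2+2$ places us exactly in the range where a generic one-parameter family of maps of a surface into a $6$-manifold consists of embeddings (the two-dimensional analogue of isotopy of knots in $\R^4$), and this is the crux of the argument.

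The genuinely delicate step is to carry out this transversality argument compatibly with all three side constraints simultaneously. I would support the perturbation in the interior of $S_g\setminus h^{-1}(B)$, after fixing a collar, so that constancy on $B$ and the endpoint values are untouched, and take the correction small enough that the moving part stays in the open set $\C P^3\setminus B$. For conjugation-equivariance I would work equivariantly: conjugation acts freely on $\C P^3\setminus\R P^3\supseteq\C P^3\setminus B$, so equivariant transversality applies on the moving region and still drives $D$ to the empty set. The only double points that equivariance could force are those with $y=\sigma(x)$ for the involution $\sigma$ on $S_g$ induced by conjugation, but $h_t(x)=h_t(\sigma(x))$ forces $h_t(x)=conj(h_t(x))$, hence $h_t(x)\in\R P^3\subset B$, which is impossible in the moving region. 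Granting that the transversality argument can be run with these three constraints in force, the perturbed $h$ is a family of conjugation-invariant embedded flexible links joining $K$ to $K'$, that is, the desired flexible isotopy.
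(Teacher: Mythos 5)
Your proposal is correct and follows essentially the same route as the paper's proof: reduce the lemma to a general-position statement for the conjugation-equivariant homotopy, use the dimension count for a surface moving in a one-parameter family inside a $6$-dimensional space to kill double points, and rule out conjugate-pair collisions by noting that $h_t(x)=h_t(\sigma(x))$ would force a real value, impossible in the moving region. The paper's own argument is exactly this (stated much more tersely); your additional care about rel-endpoint/rel-$B$ perturbation, the immersion locus, and the persistence of the flexible-link conditions fills in details the paper leaves implicit.
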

\begin{proof}
We can assume that the homotopy is generic. If it is not an isotopy then $h(p,t)=h(p',t)$ for some pair $p,p'$ with $p\neq p'$. If $\bar{p}=p'$ then $h(p,t)$ must be real contradicting the assumption. Thus $\bar{p}\neq p'$. Then we can extend any local homotopy to a homotopy respecting complex conjugation. Since $K$ is a two dimensional submanifold of $\C P^3$ a generic homotopy has no selfintersections thus we can assume that the homotopy can be modified around $p,t$ such that a the intersection is removed while respecting complex conjugation.  
\end{proof}

\begin{theorem}
Two flexible links $K$ and $K'$ are flexibly isotopic if and only if their degree, genus, Type and encomplexed writhe coincides and $\R K$ is smoothly isotopic to $\R K'$. 
\end{theorem}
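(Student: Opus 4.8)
The plan is to treat the two implications separately, with the forward (``only if'') direction being a routine check and the reverse (``if'') direction carrying all of the content. For the forward direction, suppose $K$ and $K'$ are flexibly isotopic. The degree is the homology class $d[\C P^1]\in H_2(\C P^3)$ and is preserved by any ambient isotopy; the genus is a diffeomorphism invariant of $S_g$; the Type is preserved because a flexible isotopy carries $\R K$ to $\R K'$ and $K\setminus\R K$ to $K'\setminus\R K'$ homeomorphically, hence preserves connectedness of the complement of the real part; the smooth isotopy class of $\R K$ is preserved since the restriction of a flexible isotopy to the real part is a smooth isotopy; and the encomplexed writhe is a flexible isotopy invariant as recalled in Section~\ref{shade}. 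This settles ``only if''.

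For the reverse direction, I would reduce everything to an application of Lemma~\ref{homoiso}. First, since $\R K$ is smoothly isotopic to $\R K'$, Lemma~\ref{isotopyextension} produces a flexible isotopy after which we may assume $\R K=\R K'$ as subsets of $\R P^3$. Next I would arrange that $K$ and $K'$ coincide on a small conjugation-invariant neighbourhood $B$ of $\R P^3$: by the third condition in the definition of a flexible link, both surfaces have tangent plane along $\R K$ equal to the complexification of the tangent line of $\R K$, so they agree to first order along the real part. An isotopy supported in $B$, respecting conjugation and the flexibility conditions, then makes them agree on all of $B$, and the equality of writhes is what guarantees that the normal framing data near $\R P^3$ (the self-linking contribution $\R w_N(K)=lk(\R K,\R\hat K)$) can be matched.

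With $K$ and $K'$ agreeing on $B$, the surfaces $K\setminus B$ and $K'\setminus B$ are conjugation-equivariant surfaces with identical boundary inside $\C P^3\setminus B$, and the goal becomes to produce a conjugation-respecting homotopy rel boundary between them that stays in $\C P^3\setminus B$; Lemma~\ref{homoiso} then upgrades such a homotopy to a flexible isotopy and finishes the argument. Since $\C P^3$ is simply connected and $\R P^3$ has codimension three, the complement $\C P^3\setminus B$ remains simply connected, and obstruction theory for maps of the $2$-dimensional source shows that the desired homotopy exists precisely when the relative classes of $K\setminus B$ and $K'\setminus B$ in $H_2(\C P^3\setminus B,\partial)$ agree. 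The absolute part of this class is pinned down by the common degree $d$, the genus and Type fix the equivariant diffeomorphism type of $(S_g,conj)$ and the way the two conjugate halves are glued along $\R K$, and the part recording how the surface links $\R P^3$ is detected by the complex writhe $\C wr_N(K)=\frac12 K'\cdot\Gamma_x$ through intersection with the shade.

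The main obstacle is this last step: showing that the listed invariants form a \emph{complete} set for the equivariant relative homotopy classification in $\C P^3\setminus B$. The non-equivariant problem is governed by a single integer coming from the homology class, so the real difficulty is the equivariant refinement, where I expect the work to lie in computing $H_2(\C P^3\setminus B,\partial)$ together with its conjugation action, identifying the writhe (via $\Gamma_x$) with the generator that records intersection with the complexified real lines, and verifying that equality of degree, genus, Type and writhe forces the two difference classes to agree \emph{equivariantly} rather than only on the quotient. The split between Type I and Type II must be handled with care, since it governs whether $K/conj$ is orientable and hence changes the gluing of the two halves along the real part; checking that the homotopy can always be chosen compatibly with conjugation, so that Lemma~\ref{homoiso} applies, is the crux of the argument.
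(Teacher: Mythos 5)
Your forward direction is fine, and your opening moves for the converse (apply Lemma \ref{isotopyextension} to make $\R K=\R K'$, then make the links agree on a conjugation-invariant neighbourhood) match the paper. But the core of your argument has a genuine gap, and you essentially admit it yourself: you claim that ``obstruction theory for maps of the $2$-dimensional source shows that the desired homotopy exists precisely when the relative classes of $K\setminus B$ and $K'\setminus B$ in $H_2(\C P^3\setminus B,\partial)$ agree,'' and then flag the equivariant refinement as ``the crux'' without supplying it. That claim is false as stated: homotopy rel boundary of maps of a positive-genus surface is not determined by a relative homology class alone; one must first arrange agreement on a $1$-skeleton, and that step is exactly where the paper does its real work. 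The paper's proof enlarges the coincidence set $Q$ in stages: first it matches the \emph{dividing curves} coming from the crosscaps of the quotient $K/conj$ (this is where Type enters, since $K/conj$ is orientable if and only if $K$ is of Type I, and the count of crosscaps is pinned down by genus, Type and the number of real components), then it matches arcs representing generators of $H_1(K,Q)$, each time using that $\C P^3\setminus\R P^3$ is simply connected plus Lemma \ref{homoiso} to upgrade equivariant homotopies to flexible isotopies. Only after this reduction are the remaining pieces two conjugate \emph{discs}, and only then does a homological argument suffice: by Hurewicz, $\pi_2(\C P^3\setminus\R P^3)\simeq H_2(\C P^3\setminus\R P^3)\simeq\Z^2$, with the two coordinates detected by intersection with the plane at infinity (controlled by degree) and with the shade class $\Gamma$ (controlled by the encomplexed writhe). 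Your proposal skips the spine-matching reduction entirely, which is precisely what makes the homological comparison legitimate and what resolves the equivariant issue you leave open.

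Two smaller inaccuracies: your assertion that the non-equivariant problem ``is governed by a single integer'' contradicts the relevant computation $H_2(\C P^3\setminus\R P^3)\simeq\Z^2$ (both integers are needed, which is why both degree and writhe appear in the statement); and you misplace the role of the writhe, suggesting it is needed to match normal framing data near $\R P^3$. In the paper, once $K$ and $K'$ coincide near $\R K$, the real contributions $\R w_N(K)=\R w_N(K')$ agree automatically for a common section $N$, so equality of the total writhe forces $\C wr_N(K)=\C wr_N(K')$, and this is used only at the very last step to conclude that the two conjugate discs intersect $\Gamma$ the same number of times and hence are homotopic.
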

\begin{proof}
It is obvious that if $K$ and $K'$ are flexibly isotopic then their degree, genus and encomplexed writhe coincides and $\R K$ is smoothly isotopic to $\R K'$. Our strategy will be to consider $K$ and $K'$ up to flexible isotopy and show that we can assume that they coincide on larger and larger parts. 
Assume that $K$ and $K'$ has coinciding degrees, genus, encomplexed writhe and that their real parts are smoothly isotopic. Due to Lemma \ref{isotopyextension} we can construct a flexible isotopy taking the $\R K$ to $\R K'$. From now on we may assume that $\R K=\R K'$.
Furthermore, we can assume that $K$ coincides with $K'$ on a small neighbourhood of $\R K=\R K'$. We wish to extend this neighbourhood on which the links coincide. The quotient space $K_C:=K/conj$ is a $2-$manifold with boundary. Each boundary component corresponds to a component of $\R K$. The manifold $K_C$ is orientable if and only if $K$ is of Type I. If $K_C$ is not orientable, we can (by the classification of 2-manifolds) consider it as a sphere with some disks removed and a number of crosscaps added. In each crosscap we can find a simple smooth connected curve $S$ whose preimage $\hat S$ under the projection $K\rightarrow K_C$ is also a simple closed connected curve which covers $S$ twice. The conjugation acts on $\hat S$ by rotating it by $\pi$ (using a diffeomorphism to $S^1$). We call such a curve $\hat S$ a \emph{dividing curve} in $K$.
Let $M$ be the number of crosscaps in $K_C$ and $N$ be the number of components of $\R K$. Then $M=0$ if $K$ is of Type I. If the links are of type II we have $M+N=g+1$ since the dividing curves together with the real components split $K$ into two parts diffeomorphic to spheres with $M+N$ disks removed. Since the number of crosscaps in $K_C$ only depend on the number of components of $\R K$, the Type and the genus we have that $K'_C\simeq K_C$.
Take two dividing curves $S\subset K$ and $S'\subset K'$. Let $p,\bar{p}$ be two points in $S$ such that they are sent to each other by the conjugation and let $p',\bar p'\in S'$. Let $B(p)$ be a small neighbourhood of $p$ in $K$. We can easily construct a flexible isotopy taking $p$ to $p'$ such that the isotopy is constant outside of $B(p)$ and $\bar{B(p)}=B(\bar p)$  by Lemma $\ref{homoiso}$. Since the flexible isotopy respects conjugation $\bar p$ will end up at $\bar p'$. We can thus assume that $p=p'$. Take a curve $c$ connecting $p$ to $\bar p$ in $S$ and a curve $c'$ connecting $p$ to $\bar p$ in $S'$. Since $\C P^3\setminus \R P^3$ is simply connected we can find a homotopy taking $c$ to $c'$ which can then be extended to a flexible isotopy such that $S$ is taken to $S'$.
We repeat this construction for each dividing curve arising from crosscaps in $K_C, K_C'$ choosing the paths such that $K$ and $K'$ coincide on a small neighbourhood $Q$ of the union of $\R K$ with $\cup S_i$ where $S_i$ are the dividing curves, and where the orientations on $Q$ induced by $K$ and $K'$ agree (the orientations on $K$ and $K'$ are induced from $\R K$ using the complex structure). The submanifold $Q$ divides $K$ and $K'$ into two pieces which are conjugate to each other. Since the orientations agreed around the dividing curves and around $\R K$, we have a diffeomorphism taking $K$ to $K'$ which is constant on $Q$ (then $K\setminus Q$ and $K'\setminus Q$ consist of two spheres with the same number of discs removed, the orientations agreeing ensures that we can "match up" the boundaries to the boundaries of $Q$ in a way that agrees). See Figure \ref{SQHpic} for a picture.

\begin{figure}[ht]
\includegraphics[scale=0.7]{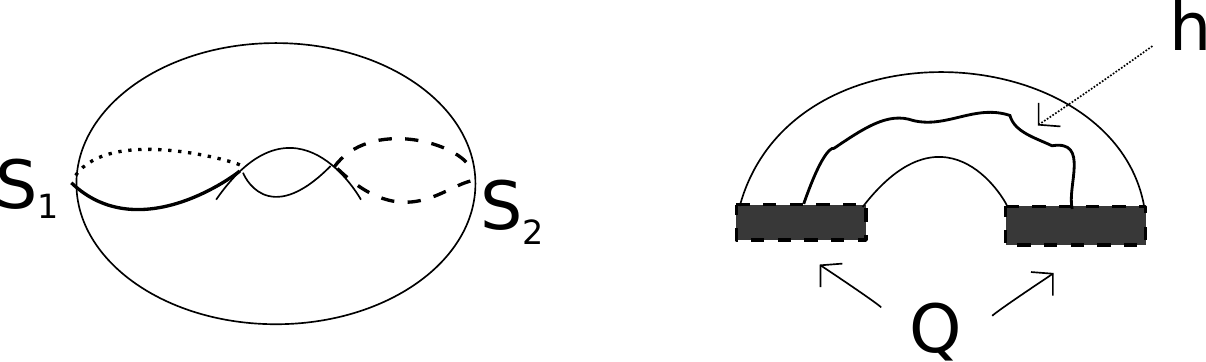}
\caption{A genus 1 flexible link $K$ with a real component $S_1$ and a dividing curve $S_2$, together with a picture of $K^+$ with $Q$ and $h$ marked.}
\label{SQHpic}
\end{figure} 

Choose a path $h$ representing a nontrivial class in $H_1(K,Q)$. Since $\C P^3\setminus \R P^3$ is simply connected we can deform this path together with a tubular neighbourhood however we wish. Choose a path $h'$ representing the same class in $H_1(K',Q)$ (using the diffeomorphism taking $K$ to $K'$ taking $Q$ to $Q$ identically). We then construct a homotopy taking  $h$ to $h'$ together with their tubular neighbourhoods and extend it (first locally and then by respecting conjugation) to a flexible isotopy of $K$ such that it leaves $Q$ invariant and takes $h$ to $h'$ together with a tubular neighbourhood. After repeating this strategy we can thus assume that $K\setminus Q\simeq K'\setminus Q$ consists of two complex conjugated connected components $D_K^+$, $D_K^-$ and $D_{K'}^+$,$D_{K'}^´-$ respectively, see Figure \ref{qlarge} for a picture.

\begin{figure}[ht]
\includegraphics[scale=0.4]{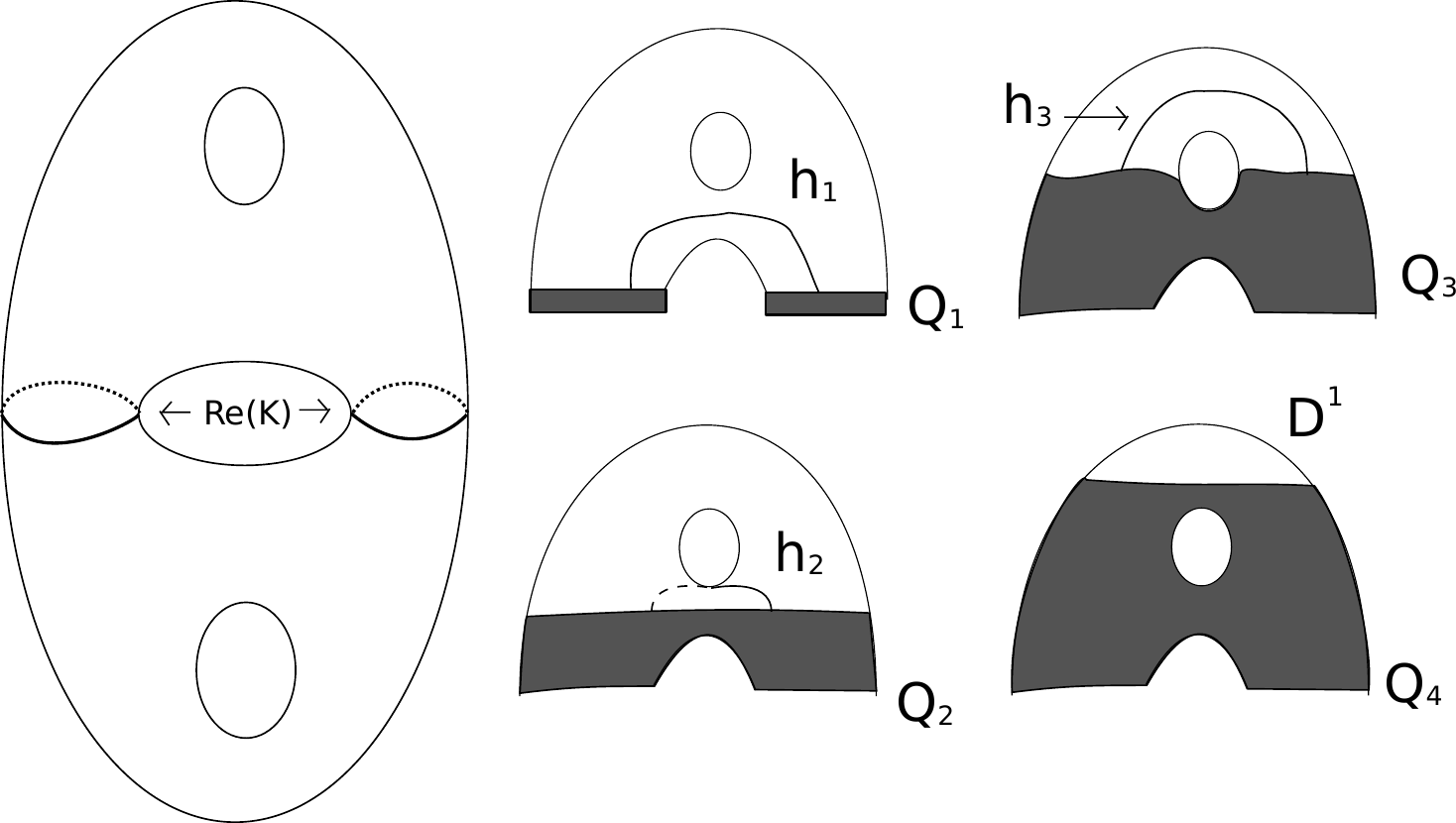}
\caption{We slowly enlarge the submanifold $Q$ of $K$ on which $K$ and $K'$ coincide, in this case for a Type 1 flexible link of genus $3$.}
\label{qlarge}
\end{figure} 

Let $h$ be the common boundary of $D_K^+$ and $D_{K'}^+$. Clearly $h$ is point-homotopic in $\C P^3\setminus \R P^3$ (since it bounded a disc). We contract $h$ to a point $p$ and examine which class in the homotopy group $\pi_2(\C P^3\setminus\R P^3,p)$ the discs belong to. If we can show that they belong to the same class then we are done, since we would have a homotopy taking $D_{K}^+$ to $D_{K'}^+$ which we would extend by complex conjugation so that it simultaneously takes $D_{K}^-$ to $D_{K'}^-$ which would imply that $K=K'$ when considering them up to flexible isotopy. Let $g$ represent $D_{K}^+$ and let $g'$ represent $D_{K'}^+$. By Hurewicz theorem we have that $\pi_2(\C P^3\setminus \R P^3)\simeq H_2(\C P^3\setminus \R P^3)$ since $\C P^3\setminus \R P^3$ is simply connected. The second homology $H_2(\C P^3\setminus \R P^3)$ is isomorphic to $\Z^2$ where the generator $(1,0)$ can be chosen such that it intersects infinity once with positive sign and the shade class $\Gamma$ zero times while the generator $(0,1)$ intersects infinity zero times and intersects the shade class $\Gamma$ once with positive sign.
Chose some real normal vector field $N$ along $\R K=\R K'$. Since the pushoffs of $\R K$ and $\R K'$ coincide along $N$ we must have that $\R w_N(K)=\R w_N(K')$. Since $wr(K)=wr(K´)$ we have that $\C wr_N(K)=\C wr_N(K')$. We recall that $\C wr_N(K)$ was calculated by taking the intersection number of $K$ pushed off along $iN$ with the shade class $\Gamma$. Since $K$ coincides with $K'$ outside of our discs their pushoffs coincide outside the discs and so the intersection number of $g$ with $\Gamma$ must coincide with the intersection number of $g'$ with $\Gamma$. For the same reason, the intersection numbers of $g,g'$ with infinity must also coincide. But then $g=g'$ in $H_2(\C P^3\setminus \R P^3)$ and so they coincide in homotopy as well, giving us that $K=K'$ when considered up to flexible isotopy.

\newpage
\section{Construction and complete classification of flexible links}\label{construct}
In this section we show how to construct flexible links with given Type, genus, degree, writhe and smooth isotopy class of the real part as long as they satisfy the restrictions in Section \ref{basic}. 
\begin{theorem}\label{thmconstruct}
Let $L$ be a smooth link of $n$ components in $\R P^3$ and let $d,g,w,X$ be integers such that 
\begin{itemize}
\item $g\geq0, X\in\{1,2\}$
\item $g\geq n+X-2$
\item $[L]=d[\R P^1]\in H_1(\R P^3)$ 
\item $1\leq g, g\equiv n+1 \mod 2$ if $X=1$
\item $w\equiv \frac{(d-1)(d-2)}{2}-g \mod 2$
\end{itemize}
Then there exists a flexible link $K$ of degree $d$, Type $X$ and genus $g$ such that $\R K$ is smoothly isotopic to $L$ and $wr(K)=w$.
\end{theorem}
\begin{proof}
It is always possible to find $n$ real algebraic rational curves $A_i$ such that the union of their real parts is smoothly isotopic to $L$ (if $L$ is empty we simply take a real rational curve without real points). We glue them together by moving $A_i$ and $A_{i+1}$ by a flexible isotopy until they intersect in a pair of nonreal complex conjugate points. At each such intersection we resolve the singularity locally to obtain a single flexible link $K_1$ with $\R K_1$ smoothly isotopic to $L$. The flexible link $K_1$ will have genus $n-1$. If $X=2$ we take $g-n+1$ rational curves without real points and attach them in the same way to make a new flexible link $K_2$ with genus $g$. If $X=1$ we take two nonintersecting, complex conjugate toruses without real points in $\C P^3$ and attach them in the same manner. This process continues until we have a flexible link $K_2$ with genus $g$. This process keeps the last equality at every step. Since $\R K_2$ is smoothly isotopic to $L$ and is a flexible curve of some degree $d'$, we must have that $[L]=d'[\R P^1]\in H_1(\R P^3)$, and so $2|d-d'$. Let $h$ be a complex line (without real points) and let $h'$ be its conjugate line. We attach this union to $K_2$ $d-d'$ times (counted with signs, that is , we can change their orientation). Since we attach a pair of spheres genus does not change. The resulting flexible link $K_3$ has degree $d$, genus $g$ and Type $X$. Choose a affine sphere $s$ in $\C P^3\setminus\R P^3$ intersecting $\Gamma$ twice, linking it with $\R P^3$. We can assume that $s$ does not intersect its complex conjugate $\bar s$. We attach $(wr(K)-wr(K_3))/2$ copies of the union of $s$ and $\hat s$ to $K_3$ (again counted with signs) to obtain a flexible link $K$ having all the requisite attributes.
\end{proof}
\begin{corollary} The restrictions in Theorem \ref{thmconstruct}, except the last one, are all necessary and come from Section \ref{basic}. Since there is just one flexible link up to flexible isotopy by Theorem \ref{main1} for a given list of such parameters this construction gives all flexible links up to flexible isotopy obeying this natural condition on the writhe. We plan to show that the last condition is necessary in an upcoming paper.
\end{corollary}
Note that this implies that there are comparatively many flexible knots compared to real algebraic knots, for instance, there are infinitely many pairwise non-isotopic flexible knots of degree $1$ while there is just one real algebraic knot (up to rigid isotopy). There exists no real algebraic knots in degree $0$ while there are (again infinitely many) flexible links there. 
For low degrees ($d\leq5$), the rigid isotopy class of real algebraic rational knots is uniquely determined by the encomplexed writhe (see \cite{bjorklund}). In degree $6$ there are examples of real algebraic rational knots with coinciding writhe which are not rigidly isotopic due to the real parts not being smoothly isotopic. It is still not known whether there exists two real algebraic rational knots of the same degree and with coinciding writhe and smoothly isotopic real parts which are not rigidly isotopic.

\end{proof}

\section*{Acknowledgements}
I would like to thank Tobias Ekholm and Grigory Mikhalkin for interesting discussions. The research was supported in part by TROPGEO grant of the ERC (the European Research Council) and the SNSF(the Swiss National Science Foundation) grant no. 140666 and 125070.


\begin{thebibliography}{MMMM}


\addtocounter{bibno}{1}
\bibitem{bjorklund}
J. Bj\"{o}rklund, Real algebraic knots of low degree, J. Knot Theory Ramifications, 20, 1285 (2011). 

\addtocounter{bibno}{1}
\bibitem{ekholmwrithe}
T. Ekholm, The complex shade of a real space and its applications, Algebra i Analiz, 2002, Volume 14, Issue 2, Pages 56–91
\addtocounter{bibno}{1}





\addtocounter{bibno}{1}
\bibitem{virocurves}
O.Viro : Progress in the topology of real algebraic varieties over the last
six years. Rus. Math. Surv. 41 (1986), no. 3, 55-82.

\addtocounter{bibno}{1}

\bibitem{virowrithe}
O.Viro. Encomplexing the writhe, Topology, ergodic theory, real algebraic geometry, 241256, Amer. Math. Soc. Transl. Ser 2, 202, Amer. Math. Soc., Providence, RI, 2002


\end{thebibliography}
\end{document}